\def\d{{\partial}}
\def\R{{\mathbf R}}
\DeclareMathOperator{\diver}{div}
\DeclareMathOperator{\RE}{Re}
\DeclareMathOperator{\IM}{Im}
\title[]{The Wigner-Lohe model for quantum synchronization and its emergent dynamics}
\author[Paolo Antonelli]{Paolo Antonelli}
\address[Paolo Antonelli]{\newline Gran Sasso Science Institute, \newline viale F. Crispi, 7,  67100 L'Aquila, Italy}
\email{paolo.antonelli@gssi.infn.it}
\author[Seung-Yeal Ha]{Seung-Yeal Ha}
\address[Seung-Yeal Ha]{\newline Department of Mathematical Sciences and Research Institute of Mathematics \newline  Seoul National University,
 Seoul 151-747, Korea (Republic of) \newline
 Korea Institute for Advanced Study, Hoegiro 87, Seoul, 130-722, Korea (Republic of)}
\email{syha@snu.ac.kr}
\author[Dohyun Kim]{Dohyun Kim}
\address[Dohyun Kim]{\newline Department of Mathematical Sciences \newline  Seoul National University, Seoul 151-747, Korea (Republic of)}
\email{dohyunkim@snu.ac.kr}
\author[Pierangelo Marcati]{Pierangelo Marcati}
\address[Pierangelo Marcati]{\newline Department of Information Engineering, Computer Science and Matheamtics,  \newline  University of L'Aquila and Gran Sasso Science Institute, 67100 L'Aquila, Italy}
\email{pierangelo.marcati@univaq.it}
\date\today
\begin{document}

\newtheorem{theorem}{Theorem}[section]
\newtheorem{lemma}{Lemma}[section]
\newtheorem{corollary}{Corollary}[section]
\newtheorem{proposition}{Proposition}[section]
\newtheorem{remark}{Remark}[section]
\newtheorem{definition}{Definition}[section]
\newtheorem{example}{Example}[section]

\renewcommand{\theequation}{\thesection.\arabic{equation}}
\renewcommand{\thetheorem}{\thesection.\arabic{theorem}}
\renewcommand{\thelemma}{\thesection.\arabic{lemma}}
\newcommand{\bbr}{\mathbb R}
\newcommand{\bbz}{\mathbb Z}
\newcommand{\bbn}{\mathbb N}
\newcommand{\bbs}{\mathbb S}
\newcommand{\bbt}{\mathbb T}
\newcommand{\bbp}{\mathbb P}
\newcommand{\bbc}{\mathbb C}
\newcommand{\ddiv}{\textrm{div}}
\newcommand{\bn}{\bf n}
\newcommand{\rr}[1]{\rho_{{#1}}}
\newcommand{\thh}{\theta}
\def\charf {\mbox{{\text 1}\kern-.24em {\text l}}}
\renewcommand{\arraystretch}{1.5}

\thanks{\textbf{Acknowledgment.} The work of P. Antonelli and P. Marcati is partially supported by PRIN grant 2015YCJY3A\_003 and by G.N.A.M.P.A. (I.N.d.A.M.) , The work of S.-Y. Ha and D. Kim are partially supported by the National Research Foundation of Korea (NRF2014R1A2A205002096).}

\begin{abstract}
We present the Wigner-Lohe model for quantum synchronization which can be derived from the Schr\"{o}dinger-Lohe model using the Wigner formalism. For identical one-body potentials, we provide a priori sufficient framework leading the complete synchronization, in which $L^2$-distances between all wave functions tend to zero asymptotically. 
\end{abstract}

\maketitle

\tableofcontents

%
%

\section{Introduction}
\setcounter{equation}{0}
Synchronization represents a phenomenon in which rhythms of weakly coupled oscillators are adjusted to the 
common frequency due to their weak interactions. It is often observed in many complex systems, e.g.,
 the flashing of fireflies, clapping of hands in a concert hall, and heartbeat regulation by pacemaker cells, etc.,
\cite{A-B, B-S, B-B,  Pe, St}. However, rigorous mathematical treatment of such collective phenomena were begun only several decades ago
by two scientists Winfree \cite{Wi2} and Kuramoto \cite{Ku1, Ku2}. 
For the mathematical modeling of synchronization, they adopted a continuous dynamical
 system approach based on their heuristic and intuitive arguments. 
 In this paper, we are mainly interested in quantum Lohe oscillators with all-to-all couplings
under one-body potential. To fix the idea, consider a classical complete network consisting of $N$ nodes, where each pair of nodes is
connected with an equal capacity which is assumed to be unity. We also assume that quantum Lohe oscillators with the same unit mass are positioned on the nodes of 
the underlying complete network. To avoid unnecessary physical complexity, we ignore entanglement and decoherence effects inherent to the quantum many-body systems. For a better 
physical modeling, such genuine quantum effects need to be taken into account. \newline

Let $\psi_i = \psi_i(t, x)$ be the wave function of the $i$-th Lohe oscillator on the spatial domain $\bbr^d$. 
Then, the dynamics of Lohe oscillators with unit mass is governed by the Schr\"{o}dinger-Lohe (S-L) model: for $(t, x)\in\mathbb R_+\times\mathbb R^d$.
\begin{align}
\begin{aligned} \label{S-L}
{\mathrm i}  \partial_t \psi_i = -\frac{1}{2} \Delta \psi_i +  V_i \psi_i  + \frac{{\mathrm i} K}{2N} \sum_{k=1}^{N}
\Big( \frac{ \|\psi_i\| \psi_k}{\|\psi_k \|}  -  \frac{\langle \psi_k, \psi_i \rangle \psi_i }{\|\psi_i\| \|\psi_k\|} \Big), \quad 1 \leq i \leq N,
\end{aligned}
\end{align}
where $\| \cdot \| := \|\cdot\|_{L^2}$ and $\langle \cdot, \cdot \rangle$ are the standard $L^2$ norm and an inner product on $\bbr^d$, and $V_i= V_{i}(x)$ and $K$ correspond to
the one-body potential and nonnegative coupling strength, respectively.
The  S--L model \eqref{S-L} was first introduced by Australian physicist Max Lohe \cite{Lo-1} several years ago
as an infinite state generalization of the Lohe matrix model \cite{Lo}. As discussed in \cite{Lo-1, Lo}, quantum synchronization has received much attention
from the quantum optics community because of its possible applications in
quantum computing and quantum information \cite{D-W-K, G-C, G-G, Ki, M-K, V-B, Z-S1, Z-S2}. The emergent dynamics of the S-L system \eqref{S-L} has been 
partially treated in \cite{C-C-H, C-H} for some restricted class of initial data and a large coupling strength.  
Recently, a new approach based on the finite-dimensional reduction has been proposed in \cite{AM, H-H} which significantly improve  the previous results \cite{C-C-H, C-H} by the Lyapunov functional approach. 
However, a complete resolution of the synchronization problem for \eqref{S-L} is still far from complete.
\newline

Our main purpose of this paper is to present a quantum kinetic analogue of the S-L model \eqref{S-L} and study its emergent dynamics. The study on the quantum kinetic model for the Schr\"{o}dinger equation dates back to Wigner's paper \cite{Wig}, in which Wigner considered the quantum mechanical motion of  a large ensemble of electrons in a vacuum under the action of the Coulomb force generated by the charge of the electrons. For the modeling of large ensemble, he introduced a quasi one-particle distribution function, so called the Wigner function and showed that it satisfies the quantum Liouville equation \cite{B-I-Z, B-M, Il, I-Z-L,Mak,Zw}.\newline  

Before we briefly describe our main results, we first recall the Wigner transform of wave function on $\bbr^d.$ For more basic facts on Wigner transforms we refer the reader to \cite{GMMP, GMP}.
\begin{definition}
For any two wave functions $\psi, \phi\in L^2$, we define the Wigner transform
\begin{equation*}
w[\psi, \phi](x, p)=\frac{1}{(2\pi)^d}\int_{\bbr^d} e^{iy\cdot p}\bar\psi\left(x+\frac{y}{2}\right)\phi\left(x-\frac{y}{2}\right)\,dy.
\end{equation*}
If we choose $\psi=\phi$, then we write $w[\psi] :=w[\psi, \psi]$.
\end{definition}
In order to shorten the formulas, we are going to introduce the following notation: if $\psi_j$, $j=1, \dotsc, N$ is the solution to the S-L system \eqref{S-L}, then we write 
\[ w_j:=w[\psi_j], \quad  w_{jk}:=w[\psi_j, \psi_k], \quad w_{jk}^+:=\RE w_{jk} \quad \mbox{and} \quad w_{jk}^-:=\IM w_{jk}. \]

Our main results of this paper are as follows. First, we show that the Wigner transforms $w_i$ and $w_{ij}^{\pm}$ satisfies a coupled non-local system:
\begin{equation} \label{W-L-0}
\begin{cases}
\displaystyle \partial_t w_j+ p \cdot \nabla_x w_j +\Theta[V](w_j) = \frac{K}{N} \sum_{k=1}^N \Big[ w^+_{jk} - \Big(\int w^+_{jk} dp dx \Big) w_j\Big], \quad x \in \bbr^d,~t > 0, \\
\displaystyle \partial_t w^+_{jk} + p \cdot \nabla_x w^+_{jk} +\Theta[V](w^+_{jk}) \\
\displaystyle \hspace{1cm} = \frac{K}{2N} \sum_{\ell=1}^N \Big[w^+_{j\ell}+w^+_{\ell k}- \Big(\int(w^+_{j\ell}+w^+_{\ell k}) dp dx  \Big)w^+_{jk} + 
\Big(\int(w^-_{j\ell}+w^-_{\ell k}) dp dx  \Big) w^-_{jk} \Big], \\
\displaystyle \partial_t w^-_{jk} + p \cdot \nabla_x w^-_{jk} +\Theta[V](w^-_{jk}) \\
\displaystyle \hspace{1cm} = \frac{K}{2N} \sum_{\ell=1}^N \Big[w^-_{j\ell}+w^-_{\ell k}- \Big(\int(w^+_{j\ell}+w^+_{\ell k}) dp dx  \Big) w^-_{jk} + 
\Big(\int(w^-_{j\ell}+w^-_{\ell k}) dp dx  \Big) w^+_{jk} \Big].
 \end{cases}
\end{equation}
Second, we derive a sufficient condition for the complete synchronization of the coupled system \eqref{W-L-0}. 
Finally, we also investigate the hydrodynamic formulation for the Schr\"odinger-Lohe system \eqref{S-L} and derive synchronization estimates in some special cases.
\newline

The rest of this paper is organized as follows. In Section 2, we present the Schr\"{o}dinger-Lohe model for quantum synchronization and discuss  previous works on the complete synchronization of the S-L model. In Section 3, we derive our augmented Wigner-Lohe model from the S-L model using the Wigner transform. In Section 4, we present a priori complete synchronization estimates for some restricted class of initial data. In Section 5, we also discuss a hydrodynamic model which can be obtained from the S-L model for two-oscillator case.

\section{Preliminaries}
\setcounter{equation}{0}
In this section, we briefly present the Schr\"{o}dinger-Lohe (S-L) model for Lohe synchronization, and review earlier results on the synchronization problem for the S-L model.

\subsection{The Schr\"{o}dinger-Lohe model} As a phenomenological model for  the quantum synchronization generalizing classical Kuramoto synchronization, 
Lohe proposed a coupled Schr\"{o}dinger-type model in \cite{Lo-1}. For $(t, x) \in \bbr_+ \times \bbr^d$ and $1 \leq i \leq N,$
 \begin{equation} \label{B-1}
{\mathrm i} \partial_t \psi_i =  -\frac{1}{2} \Delta \psi_i +  V_i \psi_i  + \frac{{\mathrm i} K}{2N} \sum_{k=1}^{N}
\Big( \frac{ \|\psi_i\| \psi_k}{\|\psi_k \|}  -  \frac{\langle \psi_k, \psi_i \rangle \psi_i }{\|\psi_i\| \|\psi_k\|} \Big),
\end{equation}
where we normalized $\hbar = 1$ and $m = 1$. 

\begin{lemma}\label{L2.1}
\emph{\cite{Lo-1}} Let $\Psi = (\psi_1, \cdots \psi_N)$ be a smooth solution to \eqref{B-1}  with initial data $\Psi_0 = (\psi^0_{1}, \cdots, \psi^0_{N})$. Then, the $L^2$ norm of $\psi_i$ is constant along the flow \eqref{B-1}:
 \[  \| \psi_i(t) \|= \| \psi^0_{i} \| ~~~\mbox{ for }~~~ t \geq 0,~ 1 \leq i \leq N. \]
\end{lemma}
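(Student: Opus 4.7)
The plan is to differentiate $\|\psi_i(t)\|^2$ in time, substitute the evolution equation \eqref{B-1}, and show term by term that the right-hand side collapses to zero. Concretely, I would start from
\begin{equation*}
\frac{d}{dt}\|\psi_i\|^2=\frac{d}{dt}\int_{\bbr^d}|\psi_i|^2\,dx=2\RE\int_{\bbr^d}\bar\psi_i\,\partial_t\psi_i\,dx,
\end{equation*}
solve \eqref{B-1} for $\partial_t\psi_i$ by multiplying by $-\mathrm i$, and split the resulting expression into the free Schr\"odinger part, the potential part, and the Lohe coupling.

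For the Schr\"odinger and potential contributions, I would use the standard fact that $-\mathrm i(-\tfrac12\Delta+V_i)$ is formally skew-Hermitian on $L^2$: integration by parts shows $\int\bar\psi_i\Delta\psi_i\,dx\in\bbr$, and $\int V_i|\psi_i|^2\,dx\in\bbr$, so multiplying both by $\mathrm i$ makes them purely imaginary. Hence their real parts vanish and these terms do not contribute to $\tfrac{d}{dt}\|\psi_i\|^2$.

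The only genuine issue is the coupling term. Using the physics convention $\langle\phi,\psi\rangle=\int\bar\phi\,\psi\,dx$, the integrals appearing from the coupling are
\begin{equation*}
\int\bar\psi_i\Big(\frac{\|\psi_i\|\psi_k}{\|\psi_k\|}-\frac{\langle\psi_k,\psi_i\rangle\psi_i}{\|\psi_i\|\|\psi_k\|}\Big)dx
=\frac{\|\psi_i\|}{\|\psi_k\|}\bigl(\langle\psi_i,\psi_k\rangle-\langle\psi_k,\psi_i\rangle\bigr).
\end{equation*}
Since $\langle\psi_i,\psi_k\rangle-\langle\psi_k,\psi_i\rangle=2\mathrm i\,\IM\langle\psi_i,\psi_k\rangle$ is purely imaginary, its real part is zero. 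Summing over $k$ and multiplying by $K/(2N)$ therefore gives no net contribution. So $\tfrac{d}{dt}\|\psi_i\|^2=0$, and the identity $\|\psi_i(t)\|=\|\psi_i^0\|$ follows upon integrating in time and using positivity of the norm.

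There is essentially no obstacle here: the lemma is a direct consequence of the skew-Hermitian structure of the Schr\"odinger operator plus the algebraic antisymmetry built into the Lohe coupling $\frac{\|\psi_i\|\psi_k}{\|\psi_k\|}-\frac{\langle\psi_k,\psi_i\rangle\psi_i}{\|\psi_i\|\|\psi_k\|}$, which was designed precisely so that pairing with $\bar\psi_i$ yields a purely imaginary quantity. The only mild point to check is that the coupling is well-defined along the flow, i.e.\ $\|\psi_k(t)\|$ does not vanish; but this follows a posteriori from the conservation statement itself, or a priori by a short continuity argument starting from nonzero initial norms.
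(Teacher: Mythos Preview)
Your argument is correct and is exactly the standard computation one would expect: differentiate $\|\psi_i\|^2$, use self-adjointness of $-\tfrac12\Delta+V_i$ to kill the linear part, and observe that pairing the Lohe coupling with $\bar\psi_i$ produces the purely imaginary quantity $\tfrac{\|\psi_i\|}{\|\psi_k\|}\bigl(\langle\psi_i,\psi_k\rangle-\langle\psi_k,\psi_i\rangle\bigr)$. The paper itself does not supply a proof of this lemma; it simply attributes the result to Lohe's original paper \cite{Lo-1}, so there is nothing to compare against beyond noting that your write-up is the natural one and matches what that reference contains.
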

In view of the previous lemma, from now on we will assume that $\|\psi^0_{i}\|=1$, $1\leq i\leq N$, so that system \eqref{B-1} becomes
\begin{equation}\label{SL}
{\mathrm i} \d_t\psi_j=-\frac12\Delta\psi_j+V_j\psi_j+ \frac{{\mathrm i}K}{2N}\sum_{k=1}^N\left(\psi_k-\langle\psi_k, \psi_j\rangle\psi_j\right).
\end{equation}
For the space-homogeneous case, we set the spatial domain to be a periodic domain $\bbt^d$ and choose a special choice of $V_i$:
\[ V_{i}(x) =  \Omega_i: \mbox{constant}, \qquad \psi_i(t, x) = \psi_i(t), \quad (t, x) \in\bbr_+\times \bbt^d. \]
system \eqref{B-1} can be reduced to the Kuramoto model which is a prototype model for classical synchronization. In this special setting, the  S-L model becomes
\begin{equation} \label{S-L-h}
{\mathrm i} \frac{ d  \psi_i} {d t}  =  \Omega_i \psi_i   + \frac{{\mathrm i}  K}{2N} \sum_{k=1}^{N}
\Big( \frac{ |\psi_i| \psi_k}{|\psi_k|}  -  \frac{\langle \psi_i, \psi_k \rangle \psi_i }{|\psi_i|  |\psi_k|} \Big).
\end{equation}
We next simply take the ansatz for $\psi_i$:
\begin{equation} \label{B-2}
 \psi_i := e^{-{\mathrm i} \theta_{i}}, \quad 1 \leq i \leq N
\end{equation}
and substitute this ansatz into \eqref{S-L-h} to obtain
\[ {\dot \theta}_i \psi_i = \Omega_i \psi_i + \frac{{\mathrm i} K}{2N} \sum_{k=1}^{N} \Big(\psi_k - e^{-{\mathrm i}(\theta_i - \theta_k)} \psi_i  \Big). \]
Then, we take the inner product of the above relation with $\psi_i$ and compare the real part of the resulting relation to get the Kuramoto model for
classical synchronization \cite{A-B, B-S, C-H-J-K, H-K-R, H-N-P1, H-N-P2}:
\begin{equation} \label{B-3}
  {\dot \theta}_i = \Omega_i + \frac{K}{N} \sum_{k=1}^{N} \sin (\theta_k - \theta_i).   
\end{equation}  
Thus, the S-L model can be viewed as a quantum generalization of the Kuramoto model. 

\subsection{Previous results} In this subsection, we briefly review the previous results \cite{C-C-H, C-H, H-H, AM} on the complete synchronization of the S-L model. For this, we first recall the definition of the complete synchronization as follows.
\begin{definition} \label{D2.1} Let $\Psi = (\psi_1, \cdots \psi_N)$ be a smooth solution to \eqref{B-1}  with initial data $\Psi^0 = (\psi_{1}^{0}, \cdots, \psi_{N}^{0})$. Then, the S-L model exhibits an asymptotic phase-locking if the following relations holds:
\begin{equation} \label{B-4}
 \exists~~\lim_{t \to \infty} \langle \psi_i, \psi_j \rangle = \alpha_{ij} \in \bbc.  
\end{equation} 
\end{definition}
\begin{remark} For the classical phase models such as \eqref{B-3}, asymptotic phase-locking is defined as the following condition:
\begin{equation} \label{B-5}
 \exists~~\lim_{t \to \infty} |\theta_i(t) - \theta_j(t)| = \theta_{ij}^{\infty}.
\end{equation}
Via the relation \eqref{B-2}, we can see that \eqref{B-4} and \eqref{B-5} are closely related. In fact, in \cite{C-H} for identical 
potentials $V_i = V_j$, the complete synchronization is defined as 
\begin{equation} \label{B-6}
\lim_{t \to \infty} \|\psi_i(t) - \psi_j(t) \| = 0, \quad 1 \leq i, j \leq N.
\end{equation}
Note that the condition \eqref{B-6} and normalization condition $||\psi_i|| = 1$ yield
\[ \lim_{t \to \infty} \langle \psi_i(t), \psi_j(t) \rangle = 1. \]
Thus, the condition \eqref{B-6} satisfies the condition \eqref{B-4}. Recently, in \cite{AM, H-H} the case with different one-body potentials was treated, at least for $N=2$. In this framework it is shown that, in some regimes, the limit in \eqref{B-4} is not $1$ but depends on the difference between the potentials. Hence the limit in \eqref{B-6} gives a non-zero constant. This is indeed the more general case, when the system \eqref{SL} exhibits complete frequency synchronization but not phase synchronization. For more details we address the reader to \cite{AM, H-H}.
\end{remark}
As mentioned in the Introduction, the S-L model was first considered in Lohe's work \cite{Lo-1} for the non-Abelian generalization of the Kuramoto model. However, the first rigorous mathematical studies of the S-L model were treated by the second author and his collaborators in \cite{C-H, H-H} 
in two different methodologies. The first methodology is to use $L^2$-diameters for $\{ \psi_i \}$ as a Lyapunov functional and derive a 
Gronwall type differential inequality to conclude the complete synchronization with $\alpha_{ij} = 1$. More precisely, we set 
\[ D(\Psi):= \max_{i,j} ||\psi_i - \psi_j||. \]
In \cite{C-H}, authors derived a differential inequality for the diameter $D(\Psi)$:
\[ \frac{d}{dt} D(\Psi) \leq K (D(\Psi)) \Big(D(\Psi)-\frac{1}{2} \Big), \quad t > 0. \]
This leads to an exponential synchronization of the \eqref{S-L}.
\begin{theorem}\label{T2.1}
\emph{\cite{C-H}}
Suppose that the coupling strength and initial data satisfy
\[  K>0, \quad V_i = V, \quad  \|\psi^0_{i}\|_{L^2} = 1, \quad 1 \leq i \leq N, \quad   D(\Psi^0) <\frac{1}{2}.  \]
Then, for any solution $\Psi= (\psi_1, \dots, \psi_N)$ to \eqref{S-L}, the diameter $D(\Psi)$ satisfies
\[ D(\Psi(t)) \leq  \frac{D(\Psi^0)}{D(\Psi^0) + (1-2 D(\Psi^0)) e^{K t}}, \quad t \geq 0. \]
\end{theorem}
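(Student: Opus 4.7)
The plan is to reduce the statement to a Grönwall-type scalar inequality for $D(\Psi)$ and then integrate it explicitly. The starting point is to differentiate $\|\psi_i - \psi_j\|^2$ along \eqref{SL}. Because $V_i \equiv V$, the common Hamiltonian $-\tfrac{1}{2}\Delta + V$ is self-adjoint, and its contribution cancels upon taking the real part; only the Lohe coupling (which carries the prefactor $i$) survives, producing
\begin{equation*}
\frac{d}{dt}\|\psi_i - \psi_j\|^2 \;=\; -\frac{K}{N}\,\mathrm{Re}\sum_{k=1}^N \bigl\langle \langle\psi_k,\psi_i\rangle\psi_i - \langle\psi_k,\psi_j\rangle\psi_j,\;\psi_i-\psi_j\bigr\rangle.
\end{equation*}

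Next, I pass to the diameter $D(\Psi)$. Since $D(\Psi)$ is a maximum of Lipschitz functions of $t$, a standard Danskin/envelope argument identifies $\tfrac{d}{dt}D(\Psi)$ almost everywhere with the derivative along a pair $(i^\ast,j^\ast)$ currently realizing the maximum. Expanding $\langle\psi_\ell,\psi_m\rangle = 1 - \tfrac{1}{2}\|\psi_\ell-\psi_m\|^2 + i\,\mathrm{Im}\langle\psi_\ell,\psi_m\rangle$, using the normalization $\|\psi_k\|=1$, the elementary bound $|\mathrm{Im}\langle\psi_\ell,\psi_m\rangle|\leq\|\psi_\ell-\psi_m\|$, and the triangle-type estimates $\|\psi_{i^\ast}-\psi_k\|,\|\psi_k-\psi_{j^\ast}\|\leq D(\Psi)$, I aim to extract a leading linear drift from the $1$ in the expansion above and absorb the remaining cross terms as a quadratic/cubic correction, yielding the differential inequality
\begin{equation*}
\frac{d}{dt}D(\Psi) \;\leq\; K\,D(\Psi)\Bigl(D(\Psi)-\tfrac{1}{2}\Bigr).
\end{equation*}
Under the assumption $D(\Psi^0) < \tfrac{1}{2}$, the right hand side is strictly negative at $t=0$; a continuation argument based on the monotonicity of the right hand side in $D$ then keeps $D(\Psi(t)) < \tfrac{1}{2}$ for all $t \geq 0$. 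The resulting inequality is of Bernoulli type, and the substitution $E = 1/D(\Psi)$ reduces it to a linear first-order ODE whose explicit integration delivers the stated logistic-type bound.

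The main obstacle is the algebraic bookkeeping in the second step: to recover the sharp drift coefficient $-\tfrac{K}{2}D(\Psi)$ one has to exploit the specific real part of the Lohe coupling, i.e., harvest the gain from the $1$ in $\mathrm{Re}\langle\psi_\ell,\psi_m\rangle = 1 - \tfrac{1}{2}\|\psi_\ell-\psi_m\|^2$ before bounding the imaginary pieces by absolute values. A naive triangle-inequality estimate at the level of complex inner products loses this linear drift entirely and produces only a quadratic Grönwall inequality, which is insufficient to force synchronization. Once the drift is in place, the remaining ODE integration and the continuation argument are routine.
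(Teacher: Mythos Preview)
Your plan coincides with what the paper sketches: the paper does not prove Theorem~\ref{T2.1} but simply quotes from \cite{C-H} the differential inequality
\[
\frac{d}{dt}D(\Psi)\le K\,D(\Psi)\Big(D(\Psi)-\tfrac12\Big)
\]
and asserts that it ``leads to an exponential synchronization''. Your outline---differentiate $\|\psi_i-\psi_j\|^2$ along \eqref{SL}, pass to the maximizing pair via an envelope argument, extract the linear drift from $\RE\langle\psi_\ell,\psi_m\rangle=1-\tfrac12\|\psi_\ell-\psi_m\|^2$, and then integrate the resulting Bernoulli inequality---is exactly the Lyapunov approach the paper is alluding to.

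One quantitative point you should check when you carry out the algebra: the inequality $D'\le KD(D-\tfrac12)$, as written both in the paper and in your proposal, integrates (with equality) to
\[
D(t)=\frac{D_0}{2D_0+(1-2D_0)e^{Kt/2}},
\]
which decays like $e^{-Kt/2}$, not $e^{-Kt}$; this does \emph{not} match the bound stated in Theorem~\ref{T2.1}. The bound in the theorem is the solution of $D'=KD(D-1)$, so somewhere a factor is off---either in the constant of the differential inequality as quoted here, or in the exponent of the final estimate. Since the paper is only citing \cite{C-H}, the discrepancy is likely a transcription issue; when you do the detailed estimate yourself, make sure the constant you actually obtain in the Gr\"onwall step is consistent with the claimed logistic bound before writing ``explicit integration delivers the stated bound''.
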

\begin{remark} For distinct one-body potentials, we do not have an asymptotic phase-locking estimate for the S-L model yet, however 
in \cite{C-C-H}, for some restricted class of initial data and large coupling strength, a weaker concept of synchronization, namely practical synchronization 
estimates have been obtained:
\[  \lim_{K \to \infty} \limsup_{t \to \infty} \max_{i, j} || \psi_i - \psi_j || = 0. \]
On the other hand, at least in the two oscillator case, it is possible to improve considerably the practical synchronization result: indeed in \cite{AM, H-H} a complete picture of different regimes is shown, where the system \eqref{SL} exhibits complete synchronization or dephasing, i.e. time periodic orbits for the correlation function.
\end{remark}
Recently, an alternative approach to prove synchronization for the S-L model was proposed both in \cite{AM} and \cite{H-H}, by using a finite dimensional reduction. More precisely, in both papers the authors consider the correlations between the wave functions,
\begin{equation}\label{eq:corr_f}
z_{jk}(t):=\langle\psi_j, \psi_k\rangle(t)=r_{jk}(t)+is_{jk}(t),
\end{equation}
and they study their asymptotic behavior. Moreover, in \cite{AM} the introduction of the order parameter, defined in analogy with the classical Kuramoto model, allows to give a more general result.
\begin{theorem}\cite{AM}
Let $(\psi_1, \dotsc, \psi_N)\in\mathcal C(\bbr_+;L^2(\bbr^d))^N$ be the solution to \eqref{SL} with initial data $(\psi_1(0), \dotsc, \psi_N(0))=(\psi^0_{1}, \dotsc, \psi^0_{N})\in L^2(\bbr^d)^N$, and we assume that
\begin{equation} \label{New-1}
\sum_{k=1}^N\RE z_{jk}(0)>0,\quad\textrm{for any}\;j=1, \dotsc, N.
\end{equation}
Then we have
\begin{equation*}
|1-z_{jk}(t)|\lesssim e^{-Kt}, \quad\textrm{as}\;t\to\infty.
\end{equation*}
\end{theorem}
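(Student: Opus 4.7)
The plan is to recast the theorem as a statement about a finite-dimensional ODE system for the correlations $z_{jk}(t)=\langle\psi_j, \psi_k\rangle$ defined in \eqref{eq:corr_f}. Because all potentials coincide, $V_j\equiv V$, the operator $H=-\frac12\Delta+V$ is self-adjoint and identical in every equation of \eqref{SL}. Differentiating $z_{jk}$ in time and substituting the S--L equations, the contributions of $H\psi_j$ and $H\psi_k$ cancel by self-adjointness, and the Lohe coupling terms collapse into the compact closed-form identity
\begin{equation*}
\dot z_{jk}=\frac{K}{2}(1-z_{jk})(R_j+\overline{R_k}),\qquad R_j(t):=\frac{1}{N}\sum_{\ell=1}^N z_{j\ell}(t) = \langle\psi_j, \Psi\rangle(t),
\end{equation*}
where $\Psi:=\frac{1}{N}\sum_k\psi_k$ is the Kuramoto-type global mean field and $R_j$ the order parameter attached to $\psi_j$. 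Crucially, hypothesis \eqref{New-1} reads exactly $\RE R_j(0)>0$ for every~$j$.

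Setting $u_{jk}:=1-z_{jk}$, this ODE becomes $\dot u_{jk}=-\tfrac{K}{2}u_{jk}(R_j+\overline{R_k})$, whose modulus can be integrated explicitly:
\begin{equation*}
|1-z_{jk}(t)|^2=|1-z_{jk}(0)|^2\exp\left(-K\int_0^t\bigl(\RE R_j(s)+\RE R_k(s)\bigr)ds\right).
\end{equation*}
The proof is thus reduced to producing a lower bound $\int_0^t(\RE R_j+\RE R_k)\,ds\geq 2t-C$ with a constant $C$ depending only on the initial data.

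The main obstacle is persistence of positivity: one must show $\alpha:=\inf_{t\geq 0,\,j}\RE R_j(t)>0$. Averaging the ODE over $k$ gives
\begin{equation*}
\dot R_j=\frac{K}{2}R_j(1-R_j)+\frac{K}{2N}\sum_k(1-z_{jk})\overline{R_k},
\end{equation*}
which is not closed in the $R_j$'s alone. Studying $m(t):=\min_j\RE R_j(t)$ and computing its Dini derivative at an index $j_0$ realizing the minimum, the principal contribution $Km(1-m)$ is non-negative, but the cross term $-\frac{K}{2N}\sum_k(\IM z_{j_0k})(\IM R_k)$ may have unfavorable sign. The complementary ingredient is that the averaged parameter $\|\Psi\|^2=\frac{1}{N}\sum_j \RE R_j$ is monotone non-decreasing along the flow (by direct differentiation using the S--L equation), combined with the elementary pointwise bounds $(\IM R_k)^2\leq \|\Psi\|^2-(\RE R_k)^2$ and $(\IM z_{jk})^2\leq 2(1-\RE z_{jk})$; putting these together produces a differential inequality for $m(t)$ preventing it from reaching zero. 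This invariance argument is where the bulk of the technical effort lies.

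Once $\alpha>0$ is secured, the explicit formula for $|u_{jk}|^2$ immediately yields the initial decay $|1-z_{jk}(t)|\leq |1-z_{jk}(0)|\,e^{-K\alpha t}$. A one-step bootstrap then produces the sharp rate: since
\begin{equation*}
0\leq 1-\RE R_j(t)=\frac{1}{N}\sum_k\RE(1-z_{jk}(t))\leq\frac{1}{N}\sum_k|1-z_{jk}(t)|\lesssim e^{-K\alpha t}
\end{equation*}
is integrable in time, the deficit $\int_0^\infty(2-\RE R_j-\RE R_k)\,ds$ is finite. Inserting this control back into the explicit formula for $|u_{jk}|^2$ gives $|1-z_{jk}(t)|^2\lesssim e^{-2Kt}$, and hence $|1-z_{jk}(t)|\lesssim e^{-Kt}$ as claimed.
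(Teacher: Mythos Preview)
The paper does not actually prove this theorem: it is quoted verbatim from \cite{AM} in the ``Previous results'' subsection, so there is no proof in the present paper to compare against. Your overall strategy --- deriving the closed ODE $\dot z_{jk}=\tfrac{K}{2}(1-z_{jk})(R_j+\overline{R_k})$, integrating $|1-z_{jk}|^2$ explicitly, and bootstrapping from a preliminary rate $e^{-K\alpha t}$ to the sharp rate $e^{-Kt}$ --- is correct and is indeed the finite-dimensional reduction approach of \cite{AM,H-H}. The derivation of the ODE and the bootstrap step are both fine.

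The genuine gap is the persistence-of-positivity step, which you yourself flag as ``where the bulk of the technical effort lies'' but do not actually carry out. You list as ingredients (i) monotonicity of $\|\Psi\|^2$, (ii) $(\IM R_k)^2\le\|\Psi\|^2-(\RE R_k)^2$, (iii) $(\IM z_{jk})^2\le2(1-\RE z_{jk})$, and then assert that ``putting these together produces a differential inequality for $m(t)$ preventing it from reaching zero.'' But no such inequality is exhibited, and these ingredients do not obviously close: at the minimizing index $j_0$, the unfavorable cross term $-\tfrac{K}{2N}\sum_k s_{j_0k}\IM R_k$ is bounded via (ii)--(iii) by quantities of order $\sqrt{1-r_{j_0k}}$, whereas the favorable term $(1-r_{j_0k})\RE R_k\ge(1-r_{j_0k})m$ is of order $1-r_{j_0k}$; when some $1-r_{j_0k}$ is small the square root dominates, and your sketch gives no mechanism to absorb it. Moreover, the monotonicity of $\|\Psi\|^2$ that you invoke follows from $\tfrac{d}{dt}\|\Psi\|^2=\tfrac{K}{N}\sum_j\bigl(\RE R_j(1-\RE R_j)+(\IM R_j)^2\bigr)$, which is only manifestly nonnegative while all $\RE R_j\ge0$ --- precisely the statement you are trying to prove --- so it cannot be used as an independent a priori input without a continuation argument that is absent from the proposal. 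Since the entire conclusion rests on $\alpha>0$, this is not a cosmetic omission but the missing core of the proof.
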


As we will see in the next sections, the same approach used in \cite{AM, H-H} will also be exploited to infer the synchronization results for the Wigner-Lohe model \eqref{W-L} and the hydrodynamical system \eqref{eq:QHD_Lohe}. More precisely, for the quantum hydrodynamic system \eqref{eq:QHD_Lohe} we are going to need also some synchronization estimates proved at the $H^1$ regularity level. Such estimates are proved in \cite{AM}.
\section{From Schr\"odinger-Lohe to Wigner-Lohe}
\setcounter{equation}{0}
In this section we present a kinetic quantum analogue ``{the Wigner-Lohe(W-L) model}" for the quantum synchronization, which can be derived from the Schr\"{o}dinger-Lohe(S-L) model \cite{Lo-1, Lo} via the Wigner transform. In this and following sections, we assume that all one-body potentials are identical 
\begin{equation*}
V_j(x)=V(x),\quad1\leq j\leq N.
\end{equation*}
Recall that for a given a solution $\psi$ to the free Schr\"odinger equation:
\begin{equation*}
i\d_t\psi=-\frac12\Delta\psi+V\psi,
\end{equation*}
then its Wigner transform $w=w[\psi]$ satisfies
\begin{equation*}
\d_tw+p\cdot\nabla_xw+\Theta[V]w=0,
\end{equation*}
where the operator $\Theta[V]$ is defined by
\begin{equation*}
\Theta[V](w)(x, p) :=-\frac{i}{(2\pi)^d}\int e^{i(p-p')\cdot y}\left(V\left(x+\frac{y}{2}\right)-V\left(x-\frac{y}{2}\right)\right)w(x, p')\,dp'dy.
\end{equation*}
Hence, to derive the Wigner-Lohe system \eqref{W-L-0} we just need to see how the nonlocal coupling in \eqref{SL} translates at the Wigner level. More precisely, let $\psi_j$ be a solution to \eqref{SL}, then by defining $w_j=w[\psi_j]$, we see that it satisfies
\begin{equation*}
\d_tw_j+p\cdot\nabla w_j+\Theta[V]w_j=R_j,
\end{equation*}
where the remainder term $R_j$ is given by
\begin{equation*}
\begin{aligned}
R_j=&\frac{1}{(2\pi)^d}\frac{K}{2N}\sum_{k=1}^N\int e^{ip\cdot y}\Big[ \bar\psi_k\left(t, x+\frac{y}{2}\right)\psi_j\left(t, x-\frac{y}{2}\right) \cr
&+\bar\psi_j\left(t, x+\frac{y}{2}\right)\psi_k\left(t, x-\frac{y}{2}\right)\Big]\,dy -\frac{1}{(2\pi)^d}\frac{K}{2N}\sum_{k=1}^N2r_{jk}w_j\\
=&\frac{K}{N}\sum_{k=1}^N\left(w_{jk}^+-r_{jk}w_j\right),
\end{aligned}
\end{equation*}
where $r_{jk}(t) :=\RE\langle\psi_j, \psi_k\rangle(t)=\int w_{jk}^+(t, x, p)\,dxdp$. Let us recall that this last equality comes from one of the basic properties of Wigner transforms, namely
\begin{equation*}
\int w[f, g](x, p)\,dxdp=\langle f, g\rangle,
\end{equation*}
for any $f, g \in L^2$. Resuming, the equation for $w_j$ is given by
\begin{equation*}
\d_tw_j+p\cdot\nabla_xw_j+\Theta[V]w_j=\frac{K}{N}\sum_{k=1}^N\left(w_{jk}^+-r_{jk}w_j\right).
\end{equation*}
We now need to derive the equation for $w_{jk}=w[\psi_j, \psi_k]$. Since the linear part in the S-L model \eqref{SL} is common for every wave functions (remember we chose identical potentials, $V_j\equiv V$), then the linear part in the Wigner equation for $w_{jk}$ will be exactly the same as for $w_j$. Consequently we also have
\begin{equation*}
\d_tw_{jk}+p\cdot\nabla_xw_{jk}+\Theta[V]w_{jk}=R_{jk},
\end{equation*}
where
\begin{equation*}\begin{aligned}
R_{jk}=&\frac{1}{(2\pi)^d}\frac{K}{2N}\sum_{\ell=1}^N\int e^{iy\cdot p}\left(\bar\psi_\ell\left(t, x+\frac{y}{2}\right)\psi_k\left(t, x+\frac{y}{2}\right)+\bar\psi_j\left(t, x+\frac{y}{2}\right)\psi_\ell\left(t, x+\frac{y}{2}\right)\right)\,dy\\
&-\frac{1}{(2\pi)^d}\frac{K}{2N}\sum_{\ell=1}^N\left(z_{j\ell}w_{jk}+z_{\ell k}w_{jk}\right).
\end{aligned}\end{equation*}
Let us recall that $z_{jk}$ is defined in \eqref{eq:corr_f} and we notice that $\overline{z_{jk}}=z_{kj}$. Hence we obtain
\begin{equation*}
R_{jk}=\frac{K}{2N}\sum_{\ell=1}^N\left(w_{j\ell}+w_{\ell k}-(z_{j\ell}+z_{\ell k})w_{jk}\right)
\end{equation*}
and the equation for $w_{jk}$ becomes
\begin{equation*}
\d_tw_{jk}+p\cdot\nabla w_{jk}+\Theta[V]w_{jk}=\frac{K}{2N}\sum_{\ell=1}^N\left(w_{j\ell}+w_{\ell k}-(z_{j\ell}+z_{\ell k})w_{jk}\right).
\end{equation*}
By using definitions for $w_{jk}^\pm$ and the linearity of operator $\Theta[V]$, we then obtain the Wigner-Lohe system
\begin{equation} \label{W-L}
\begin{cases}
\displaystyle \partial_t w_j+ p \cdot \nabla_x w_j \Theta[V]w_j = \frac{K}{N} \sum_{k=1}^N \Big( w^+_{jk} - r_{jk}w_j\Big),\\
\displaystyle \partial_t w^+_{jk} + p \cdot \nabla_x w^+_{jk} \Theta[V](w^+_{jk}) \\
\displaystyle \hspace{3cm} = \frac{K}{2N} \sum_{\ell=1}^N \Big[w^+_{j\ell}+w^+_{\ell k}-(r_{j\ell}+r_{\ell k})w^+_{jk} -i(s_{j\ell}+s_{\ell k}) w^-_{jk} \Big], \\
\displaystyle \partial_t w^-_{jk} + p \cdot \nabla_x w^-_{jk} \Theta[V](w^-_{jk}) \\
\displaystyle \hspace{3cm} = \frac{K}{2N} \sum_{\ell=1}^N \Big[w^-_{j\ell}+w^-_{\ell k}-(r_{j\ell}+R_{\ell k})w^-_{jk} -i(s_{j\ell}+s_{\ell k}) w^+_{jk} \Big], \\
 \end{cases}
\end{equation}
\section{Emergent dynamics of the W-L model for identical potentials}
\setcounter{equation}{0}
In this section, we focus on the Wigner-Lohe model with $N=2$. In this case, system \eqref{W-L} becomes
\begin{equation}\label{2-WL}
\left\{\begin{aligned}
&\d_tw_1+p\cdot\nabla_xw_1+\Theta[V]w_1=\frac{K}{2}(w_{12}^+-r_{12}w_1), \\
&\d_tw_2+p\cdot\nabla_x w_2+\Theta[V]w_2=\frac{K}{2}(w_{12}^+-r_{12}w_2), \\
&\d_tw_{12}+p\cdot\nabla_xw_{12}+\Theta[V]w_{12}=\frac{K}{4}(w_1+w_2-2z_{12}w_{12}),
\end{aligned}\right.\end{equation}
where we have 
\begin{equation}\label{eq:def}
w_{12}^+=\RE w_{12}, \quad z_{12}=z_{12}(t)=\int w_{12}\,dxdp, \quad r_{12}=\RE z_{12}. 
\end{equation}
Let us remark that the system \eqref{2-WL}, complemented with the definitions \eqref{eq:def} above, can be considered independently on the S-L system \eqref{SL}. For such a system we will prescribe initial data $w_1^0, w_2^0, w_{12}^0$ such that $w_1^0$ and $w_2^0$ are real valued, $\int w_1^0\,dxdp=\int w_2^0\,dxdp=1$, $w_{12}^0$ is complex valued, $|\int w_{12}^0\,dxdp|\leq1$.
Let us also notice that the last equation is complex valued, so that we don't split it into two coupled equations for $w_{12}^+$ and $w_{12}^-$ as in \eqref{W-L}.

Let us now prove the synchronization for \eqref{2-WL}. First of all we remark that, by integrating the last equation over the whole phase space, we find the following ODE
\begin{equation}\label{eq:ODE}
\dot z_{12}=\frac{K}{2}(1-z_{12}^2),
\end{equation}
for which it is straightforward to give its asymptotic behavior.
\begin{lemma}\label{prop:L2sync}
Let $z_{12}(0)\in\mathbb C$ be such that $|z_{12}(0)|\leq1$ and $z_{12}(0)\neq-1$, then the solution $z_{12}(t)$ to \eqref{eq:ODE} satisfies
\begin{equation*}
|1-z_{12}(t)|\lesssim e^{-Kt}.
\end{equation*}
\end{lemma}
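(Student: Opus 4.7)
The plan is to linearize the Riccati-type ODE \eqref{eq:ODE} via the Cayley/Möbius transform
\[
u=\frac{1-z_{12}}{1+z_{12}},
\]
which is well-defined in a neighborhood of the initial datum since $z_{12}(0)\neq-1$. A direct computation gives $1-z_{12}^2=(1-z_{12})(1+z_{12})=\tfrac{4u}{(1+u)^2}$ and $\dot z_{12}=-\tfrac{2\dot u}{(1+u)^2}$, so the nonlinear equation collapses to the linear equation
\[
\dot u=-Ku,
\]
whose solution is $u(t)=u(0)e^{-Kt}$. Inverting the Möbius transform yields the explicit formula
\[
z_{12}(t)=\frac{1-u(0)e^{-Kt}}{1+u(0)e^{-Kt}},\qquad 1-z_{12}(t)=\frac{2u(0)e^{-Kt}}{1+u(0)e^{-Kt}}.
\]

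The second step is a quantitative lower bound on the denominator $|1+u(0)e^{-Kt}|$, which is what the exponential decay rate depends on. Here one uses the fact that the Cayley transform $z\mapsto(1-z)/(1+z)$ sends the closed unit disk minus $\{-1\}$ into the closed right half-plane. Hence the hypothesis $|z_{12}(0)|\leq 1$ together with $z_{12}(0)\neq-1$ forces $\RE u(0)\geq 0$. Since $e^{-Kt}>0$, the complex number $u(0)e^{-Kt}$ also has non-negative real part, so $\RE(1+u(0)e^{-Kt})\geq 1$ and in particular $|1+u(0)e^{-Kt}|\geq 1$ uniformly in $t\geq 0$. The ODE is therefore globally well-posed forward in time and never reaches the equilibrium $-1$.

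Combining the two steps gives the pointwise estimate
\[
|1-z_{12}(t)|\leq 2|u(0)|e^{-Kt}=2\left|\frac{1-z_{12}(0)}{1+z_{12}(0)}\right|e^{-Kt},
\]
which is exactly the claimed bound, with an implicit constant depending only on the initial condition through its distance from the unstable equilibrium $-1$.

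The only delicate point is the second step, i.e.\ making sure the denominator in the explicit formula does not approach zero along the trajectory; everything else is elementary algebra on the Riccati equation. The Cayley-transform argument handles this cleanly, and also explains why the assumption $z_{12}(0)\neq-1$ is sharp: initial data at the unstable equilibrium obviously cannot approach $+1$, and data of the form $z_{12}(0)=-1+\varepsilon$ make the implicit constant blow up like $1/\varepsilon$, consistent with the slower departure from the unstable equilibrium. An alternative, purely real-variable proof would be to write $f(t):=|1-z_{12}(t)|^2$ and derive a differential inequality $\dot f\leq -2Kf+C f^{3/2}$, then close a Gronwall argument after first checking that $f$ stays small; but the explicit linearization above is cleaner and also yields the precise constant.
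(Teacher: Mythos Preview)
Your proof is correct and arrives at exactly the same explicit solution as the paper; in fact the paper's proof consists of a single line, namely writing down the integrated formula
\[
z_{12}(t)=\frac{(1+z_{12}(0))e^{Kt}-(1-z_{12}(0))}{(1+z_{12}(0))e^{Kt}+(1-z_{12}(0))},
\]
which is your formula after multiplying numerator and denominator by $e^{Kt}$ and substituting $u(0)=(1-z_{12}(0))/(1+z_{12}(0))$. The difference is only in presentation: you reach the formula by linearizing via the Cayley transform $u=(1-z)/(1+z)$, while the paper simply records the closed-form solution of the Riccati equation. Your version is more complete, since you actually justify the step the paper leaves to the reader, namely that the denominator stays bounded away from zero for all $t\geq0$; your observation that $|z_{12}(0)|\leq1$ forces $\RE u(0)\geq0$, hence $|1+u(0)e^{-Kt}|\geq1$, is precisely what is needed to turn the explicit formula into the uniform bound $|1-z_{12}(t)|\leq 2|u(0)|e^{-Kt}$.
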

\begin{proof}
By integrating \eqref{eq:ODE} we obtain
\begin{equation*}
z_{12}(t)=\frac{(1+z_{12}(0))e^{Kt}-(1-z_{12}(0))}{(1+z_{12}(0))e^{Kt}+(1-z_{12}(0))}.
\end{equation*}
\end{proof}
By using the Lemma above it is then possible to show the complete synchronization for the W-L model \eqref{2-WL}.
\begin{theorem}
Let $(w_1, w_2, w_{12})$ be a solution to \eqref{2-WL} with initial data $(w_1(0), w_2(0), w_{12}(0))=(w_1^0, w_2^0, w_{12}^0)$ such that
\begin{equation*}
\int w_1^0(x, p)\,dxdp=\int w_2^0(x, p)\,dxdp=1,
\end{equation*}
and 
\begin{equation*}
\Big|\int w_{12}^0(x, p)\,dxdp \Big|\leq1,\quad\int w_{12}^0(x, p)\,dxdp\neq-1.
\end{equation*}
Then we have
\begin{equation*}
\|w_1(t)-w_2(t)\|_{L^2}^2\leq e^{-Kt},\quad\textrm{as}\;t\to\infty.
\end{equation*}
\end{theorem}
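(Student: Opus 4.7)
The plan is to derive a scalar differential inequality for $\|w_1-w_2\|_{L^2}^2$ that is driven by the already-known decay of $z_{12}(t)$.

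First I would subtract the first two equations in \eqref{2-WL}. The inhomogeneous term $\frac{K}{2}w_{12}^+$ is common to both, so it disappears in the difference, and the nonlinear damping combines into a single scalar multiplier: one obtains the linear non-autonomous Wigner equation
\begin{equation*}
\partial_t(w_1-w_2)+p\cdot\nabla_x(w_1-w_2)+\Theta[V](w_1-w_2)=-\frac{K}{2}r_{12}(t)(w_1-w_2).
\end{equation*}
Note that, since $w_1^0,w_2^0$ are real and the right-hand sides of the first two equations keep real data real, $w_1-w_2$ stays real.

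Next I would test this equation against $w_1-w_2$ in $L^2(\mathbb{R}^{2d})$. The transport term $p\cdot\nabla_x$ is skew-adjoint by integration by parts in $x$ (treating $p$ as a parameter), and $\Theta[V]$ is also skew-adjoint on $L^2(\mathbb{R}^{2d})$. The latter can be checked directly on its integral kernel $-ie^{i(p-p')\cdot y}(V(x+y/2)-V(x-y/2))$: after swapping the dummy variables $p\leftrightarrow p'$ and then $y\to -y$, both the exponential and the potential difference pick up a conjugation/sign change that flips the overall sign, which is exactly the skew-symmetry condition. With both linear operators contributing $0$, the $L^2$ pairing reduces to
\begin{equation*}
\frac{d}{dt}\|w_1(t)-w_2(t)\|_{L^2}^2=-K\,r_{12}(t)\,\|w_1(t)-w_2(t)\|_{L^2}^2,
\end{equation*}
which integrates to
\begin{equation*}
\|w_1(t)-w_2(t)\|_{L^2}^2=\|w_1^0-w_2^0\|_{L^2}^2\,\exp\!\left(-K\int_0^t r_{12}(s)\,ds\right).
\end{equation*}

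Finally I would close the estimate with Lemma \ref{prop:L2sync}: the hypotheses $|z_{12}(0)|\leq 1$ and $z_{12}(0)\neq -1$ are exactly those assumed on $\int w_{12}^0\,dxdp$, so $|1-z_{12}(t)|\lesssim e^{-Kt}$, whence $|1-r_{12}(s)|\leq |1-z_{12}(s)|\lesssim e^{-Ks}$ and therefore
\begin{equation*}
\int_0^t r_{12}(s)\,ds\geq t-\int_0^t|1-r_{12}(s)|\,ds\geq t-\frac{C}{K},
\end{equation*}
for a constant $C$ depending on $z_{12}(0)$. Substituting this lower bound into the exponential formula gives $\|w_1(t)-w_2(t)\|_{L^2}^2\lesssim e^{-Kt}$, which is the stated decay (the implicit constant being absorbed or rescaled to $1$).

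The single delicate step is the skew-adjointness of $\Theta[V]$; although it is a classical fact (it reflects the $L^2$-unitarity of the free Wigner flow with potential $V$), one should verify that the solutions considered have enough decay in $(x,p)$ to justify Fubini and the change of variables on the kernel. Everything else is a Gronwall argument combined with the explicit ODE analysis of $z_{12}$ already carried out in Lemma \ref{prop:L2sync}.
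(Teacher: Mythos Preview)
Your proposal is correct and follows essentially the same route as the paper: subtract the first two equations of \eqref{2-WL} to obtain a linear equation for $w_1-w_2$ with right-hand side $-\frac{K}{2}r_{12}(w_1-w_2)$, test in $L^2$ using the skew-adjointness of $p\cdot\nabla_x$ and $\Theta[V]$, and close with Lemma~\ref{prop:L2sync} via Gronwall. The only difference is that you spell out the skew-adjointness of $\Theta[V]$ and the integration of the Gronwall factor more explicitly than the paper does.
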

\begin{proof}
It follows from \eqref{2-WL} that it is possible to write down the equation for the difference $w_1-w_2$,
\begin{equation*}
\d_t(w_1-w_2)+p\cdot\nabla_x(w_1-w_2)+\Theta[V](w_1-w_2)=-\frac{Kr_{12}}{2}(w_1-w_2).
\end{equation*}
By multiplying it by $2(w_1-w_2)$ and by integrating over the whole phase space we obtain
\begin{equation*}
\frac{d}{dt}\|w_1(t)-w_2(t)\|_{L^2}^2=-Kr_{12}(t)\|w_1(t)-w_2(t)\|_{L^2}^2.
\end{equation*}
By Lemma \ref{prop:L2sync} we know that $|1-r_{12}(t)|\lesssim e^{-Kt}$, hence by Gronwall's inequality we obtain the synchronization result.
\end{proof}

\section{Quantum Hydrodynamics}
\setcounter{equation}{0}
In this Section, we derive the hydrodynamic equations associated to the Schr\"odinger-Lohe model \eqref{SL}. Here we follow the approach developed in \cite{A-M1, A-M2, AMContMath} where a polar factorisation method is exploited in order to define the hydrodynamical quantities also in the vacuum region. In order to simplify the exposition we mainly focus on the case of two identical oscillators. In this case the Schr\"odinger-Lohe model reads
\begin{equation}\label{eq:SL2}
\left\{\begin{aligned}
{\mathrm i} \d_t\psi_1=&-\frac12\Delta\psi_1+V\psi_1+ \frac{{\mathrm i} K}{4}(\psi_2-\langle\psi_2, \psi_1\rangle\psi_1)\\
{\mathrm i} \d_t\psi_2=&-\frac12\Delta\psi_2+V\psi_2+ \frac{{\mathrm i}K}{4}(\psi_1-\langle\psi_1, \psi_2, \rangle\psi_2).
\end{aligned}\right.
\end{equation}
The case with $N$ non-identical oscillators can be treated similarly with obvious modifications, but the study of this special case will simplify substantially the exposition.

In order to derive the hydrodynamics associated to system \eqref{eq:SL2}, we first need to ensure that it is globally well-posed in $H^1(\R^d)$. 
This is indeed a strightforward application of the standard for nonlinear Schr\"odinger equations \cite{Caz}, see for example Proposition 2.1 in \cite{AM}.
Furthermore, let us notice that by defining $z_{12}(t)=\langle\psi_1, \psi_2\rangle(t)$, then this function satisfies the ODE \eqref{eq:ODE}. This is not surprising because the W-L model was indeed derived from \eqref{eq:SL2} and because of the property 
$\int w[\psi_1, \psi_2]\,dxdp=\langle\psi_1, \psi_2\rangle$. This implies that, under the same assumptions of Lemma \ref{prop:L2sync}, in this case we also have
\begin{equation*}
|1-z_{12}(t)|\lesssim e^{-Kt}.
\end{equation*}
However, this synchronization result is too weak to be exploited for quantum hydrodynamic system derived from \eqref{eq:SL2}. Indeed, as we already remarked above, the natural space for the hydrodynamics is the finite energy space, namely $H^1$ for the wave functions. Hence we need to improve the result in the space of energy. Here we will make use of Theorem 4.5 in \cite{AM}, where we address the reader for more general results in this direction.

Let us now consider the solution $(\psi_1, \psi_2)\in\mathcal C(\bbr_+;H^1)$ to system \eqref{eq:SL2}, given by Proposition 2.1 in \cite{AM}. To derive the hydrodynamic system associated with \eqref{eq:SL2}, we first define the mass densities, namely $\rho_1=|\psi_1|^2$ and $\rho_2=|\psi_2|^2$.
By differentiating those quantities with respect to time and by using the equations above, we obtain
\begin{equation*}
\left\{\begin{aligned}
\d_t\rho_1+\diver J_1=&\frac{K}{2}(\rho_{12}-r_{12}\rho_1),\\
\d_t\rho_2+\diver J_2=&\frac{K}{2}(\rho_{12}-r_{12}\rho_2),
\end{aligned}\right.
\end{equation*}
where the associated current densities are respectively given by 
\[ J_1 :=\IM(\bar\psi_1\nabla\psi_1), \quad  J_2 :=\IM(\bar\psi_2\nabla\psi_2). \]
 Furthermore, in the equation for the mass density we also find the interaction term 
$\rho_{12}=\RE(\bar\psi_1\psi_2)$, so that $r_{12}=\RE\langle\psi_1, \psi_2\rangle=\int\rho_{12}\,dx$.

Let us notice that $\rho_{12}$ is not a mass density, since in general it can also be negative. By using those definitions we can derive the evolution equations for the current densities $J_1$ and $J_2$. For instance, by differentiating $J_1$ with respect to time we find that
\begin{equation*}
\d_tJ_1+\diver(\RE(\nabla\bar\psi_1\otimes\nabla\psi_1))+\rho_1\nabla V=\frac14\nabla\Delta\rho_1+\frac{K}{2}(J_{12}-r_{12}J_1),
\end{equation*}
where the new interaction term here is given by
\begin{equation*}
J_{12}=\frac12\IM(\bar\psi_1\nabla\psi_2+\bar\psi_2\nabla\psi_1).
\end{equation*}
Next, we use the polar factorisation Lemma in \cite{A-M1, A-M2} to infer that, for $\psi_1\in H^1(\bbr^d)$, we have
\begin{equation*}
\RE(\nabla\bar\psi_1\otimes\nabla\psi_1)=\nabla\sqrt{\rho_1}\otimes\nabla\sqrt{\rho_1}+\Lambda_1\otimes\Lambda_1,\qquad\textrm{a.e. in }\bbr^d,
\end{equation*}
where $\sqrt{\rho_1}=|\psi_1|, \Lambda_1=\IM(\bar\phi_1\nabla\psi_1)$, $\phi_1$ is the polar factor for the wave function $\psi_1$ and we have $\sqrt{\rho_1}\Lambda_1=J_1$, see \cite{A-M1, A-M2, AMContMath} for more details on the polar factorisation.
In this way we can write down the following equation for the current density $J_1$:
\begin{equation*}
\d_tJ_1+\diver\left(\frac{J_1\otimes J_1}{\rho_1}\right)+\rho_1\nabla V=\frac12\rho_1\nabla\left(\frac{\Delta\sqrt{\rho_1}}{\sqrt{\rho_1}}\right)+\frac{K}{2}(J_{12}-r_{12}J_1).
\end{equation*}
By using the equation for $\psi_2$ we obtain an analogous equation for $J_2$:
\begin{equation*}
\d_tJ_2+\diver\left(\frac{J_2\otimes J_2}{\rho_2}\right)+\rho_2\nabla V=\frac12\rho_2\nabla\left(\frac{\Delta\sqrt{\rho_2}}{\sqrt{\rho_2}}\right)+\frac{K}{2}(J_{12}-r_{12}J_2).
\end{equation*}
Resuming, by defining the hydrodynamical quantities $\rho_1, J_1, \rho_2, J_2$ associated to $\psi_1, \psi_2$, respectively, we can derive the following system:
\begin{equation*}
\left\{\begin{aligned}
&\d_t\rho_1+\diver J_1=\frac{K}{2}(\rho_{12}-r_{12}\rho_1),\\
&\d_t\rho_2+\diver J_2=\frac{K}{2}(\rho_{12}-r_{12}\rho_2),\\
&\d_tJ_1+\diver\left(\frac{J_1\otimes J_1}{\rho_1}\right)+\rho_1\nabla V=\frac12\rho_1\nabla\left(\frac{\Delta\sqrt{\rho_1}}{\sqrt{\rho_1}}\right)+\frac{K}{2}(J_{12}-r_{12}J_1),\\
&\d_tJ_2+\diver\left(\frac{J_2\otimes J_2}{\rho_2}\right)+\rho_2\nabla V=\frac12\rho_2\nabla\left(\frac{\Delta\sqrt{\rho_2}}{\sqrt{\rho_2}}\right)+\frac{K}{2}(J_{12}-r_{12}J_2).
\end{aligned}\right.
\end{equation*}
Note that the above hydrodynamical system is not closed, as we need to derive also the evolution equations for the quantities $\rho_{12}, J_{12}$. However, it is quite troublesome to derive a hydrodynamical equation for the quantity $J_{12}$. For this reason we consider the following auxiliary variables
\begin{equation*}
\rho_d :=|\psi_1-\psi_2|^2,\quad J_d :=\IM((\overline{\psi_1-\psi_2})\nabla(\psi_1-\psi_2)).
\end{equation*}
By using those variables, it is straightforward to derive their dynamical equations,

\begin{equation*}
\begin{aligned}
\d_t\rho_d=&2\RE\left\{(\overline{\psi_1-\psi_2})\left(\frac{i}{2}\Delta(\psi_1-\psi_2)-iV(\psi_1-\psi_2)+\frac{K}{4}(\psi_2-\psi_1-\langle\psi_2, \psi_1\rangle\psi_1+\langle\psi_1, \psi_2\rangle\psi_2)\right)\right\}\\
=&-\diver J_d+\frac{K}{2}\RE\left\{(\overline{\psi_1-\psi_2})\left((1+\langle\psi_1, \psi_2\rangle)(\psi_2-\psi_1)+2i\IM\langle\psi_1, \psi_2\rangle\psi_1\right)\right\}\\
=&-\diver J_d-\frac{K}{2}(1+r_{12})\rho_d+Ks_{12}\sigma_{12},
\end{aligned}
\end{equation*}
where we denoted $\sigma_{12}=\IM(\bar\psi_1\psi_2)$, so that
\[ s_{12}=\IM\langle\psi_1, \psi_2\rangle=\int_{\bbr^d} \sigma_{12}\,dx. \]
Define $\psi_d :=\psi_1-\psi_2$, then by following some similar calculations as before we find out
\begin{equation*}\begin{aligned}
\d_tJ_d=&-\frac12\RE(\Delta\bar\psi_d\nabla\psi_d-\bar\psi_d\nabla\Delta\psi_d)-\rho_d\nabla V\\
&+\frac{K}{4}\left[-2J_d+\IM\left(\langle\psi_2, \psi_1\rangle(-\bar\psi_1\nabla\psi_d+\bar\psi_d\nabla\psi_2)+\langle\psi_1, \psi_2\rangle(\bar\psi_2\nabla\psi_d-\bar\psi_d\nabla\psi_1)\right)\right].
\end{aligned}\end{equation*}
After some simple algebra, we obtain that
\begin{equation*}
\IM\left(\langle\psi_2, \psi_1\rangle(-\bar\psi_1\nabla\psi_d+\bar\psi_d\nabla\psi_2)+\langle\psi_1, \psi_2\rangle(\bar\psi_2\nabla\psi_d-\bar\psi_d\nabla\psi_1)\right)
=-2r_{12}J_d-4s_{12}G_{12},
\end{equation*}
where $G_{12} :=\frac12\RE(\bar\psi_2\nabla\psi_1-\bar\psi_1\nabla\psi_2)$. Hence the equation for $J_d$ is given by
\begin{equation*}
\d_tJ_d+\diver\left(\frac{J_d\otimes J_d}{\rho_d}\right)+\rho_d\nabla V=\frac12\rho_d\nabla\left(\frac{\Delta\sqrt{\rho_d}}{\sqrt{\rho_d}}\right)-\frac{K}{2}\left((1+r_{12})J_d+2s_{12}G_{12}\right).
\end{equation*}
Once again, to close the hydrodynamic equations, we still need to determine the evolution for $\sigma_{12}, G_{12}$. As before, the equation derived for $G_{12}$ would be too involved, for this reason we alternatively define $\psi_a=\psi_1-{\mathrm i}\psi_2$ and its hydrodynamical quantities $\rho_a=\frac12|\psi_a|^2$, $J_a=\frac12\IM(\bar\psi_a\nabla\psi_a)$.
If we write down the equation for $\psi_a$, 
\begin{equation*}
i\d_t\psi_a=-\frac12\Delta\psi_a+V\psi_a+\frac{K}{4}\left(i\psi_2+\psi_1-i\langle\psi_2, \psi_1\rangle\psi_1-\langle\psi_1, \psi_2\rangle\psi_2\right),
\end{equation*}
we can then infer the equations for $\rho_a$ and $J_a$. By proceeding as before with some straightforward but long calculations, we find out
\begin{equation*}
\begin{aligned}
&\d_t\rho_a+\diver J_a=\frac{K}{2}\left((1-s_{12})\rho_{12}-r_{12}\rho_a\right)\\
&\d_tJ_a+\diver\left(\frac{J_a\otimes J_a}{\rho_a}\right)+\rho_a\nabla V=\frac12\rho_a\nabla\left(\frac{\Delta\sqrt{\rho_a}}{\sqrt{\rho_a}}\right)+\frac{K}{2}\Big((1-s_{12})J_{12}-r_{12}J_a\Big).
\end{aligned}
\end{equation*}
We can now resume and write down the whole set of hydrodynamic equations associated to the Schr\"odinger-Lohe system \eqref{eq:SL2}:
\begin{equation}\label{eq:QHD_Lohe}\begin{aligned}
&\d_t\rho_1+\diver J_1=\frac{K}{2}(\rho_{12}-r_{12}\rho_1)\\
&\d_t\rho_2+\diver J_2=\frac{K}{2}(\rho_{12}-r_{12}\rho_2),\\
&\d_tJ_1+\diver\left(\frac{J_1\otimes J_1}{\rho_1}\right)+\rho_1\nabla V=\frac12\rho_1\nabla\left(\frac{\Delta\sqrt{\rho_1}}{\sqrt{\rho_1}}\right)+\frac{K}{2}(J_{12}-r_{12}J_1),\\
&\d_tJ_2+\diver\left(\frac{J_2\otimes J_2}{\rho_2}\right)+\rho_2\nabla V=\frac12\rho_2\nabla\left(\frac{\Delta\sqrt{\rho_2}}{\sqrt{\rho_2}}\right)+\frac{K}{2}(J_{12}-r_{12}J_2),\\
&\d_t\rho_d+\diver J_d=-\frac{K}{2}(1+r_{12})\rho_d+Ks_{12}\sigma_{12},\\
&\d_tJ_d+\diver\left(\frac{J_d\otimes J_d}{\rho_d}\right)+\rho_d\nabla V=\frac12\rho_d\nabla\left(\frac{\Delta\sqrt{\rho_d}}{\sqrt{\rho_d}}\right)-\frac{K}{2}\left((1+r_{12})J_d+2s_{12}G_{12}\right),\\
&\d_t\rho_a+\diver J_a=\frac{K}{2}\left((1-s_{12})\rho_{12}-r_{12}\rho_a\right)\\
&\d_tJ_a+\diver\left(\frac{J_a\otimes J_a}{\rho_a}\right)+\rho_a\nabla V=\frac12\rho_a\nabla\left(\frac{\Delta\sqrt{\rho_a}}{\sqrt{\rho_a}}\right)+\frac{K}{2}\left((1-s_{12})J_{12}-r_{12}J_a\right),
\end{aligned}\end{equation}
where 
\begin{eqnarray*}
&& \rho_{12} :=\frac12(\rho_1+\rho_2-\rho_d), \quad J_{12} :=\frac12(J_1+J_2-J_d), \cr
&& \sigma_{12} :=\rho_a -\frac{1}{2}(\rho_1+\rho_2), \quad G_{12} :=J_a-\frac{1}{2}(J_1+J_2). 
\end{eqnarray*}
By considering the system \eqref{eq:QHD_Lohe} above we can now prove the synchronization property. In view of the previosu synchronization results we expect that
\begin{equation*}
\lim_{t\to\infty}\left(\|\nabla\sqrt{\rho_1}-\nabla\sqrt{\rho_2}\|_{L^2}+\|\Lambda_1-\Lambda_2\|_{L^2}\right)=0
\end{equation*}
and furthermore
\begin{equation*}
\lim_{t\to\infty}\left(\|\nabla\sqrt{\rho_d}\|_{L^2}+\|\Lambda_d\|_{L^2}\right)=0.
\end{equation*}
To show the above properties we are going to use a synchronization result in $H^1$ for system \eqref{eq:SL2} given in \cite{AM}, which will be stated in the following Theorem. The result below actually holds in a more general case, see \cite{AM} for more details, however here we will state the synchronization property we are going to use for our system \eqref{eq:QHD_Lohe}.
\begin{theorem}\cite{AM}
Let $(\psi^0_{1}, \psi^0_{2})\in H^1$ be such that
\begin{equation*}
\langle\psi^0_{1}, \psi^0_{2}\rangle\neq-1.
\end{equation*}
Then, for the solution $(\psi_1, \psi_2)\in\mathcal C(\bbr_+;H^1)$ emanated from such initial data, we have
\begin{equation}\label{eq:H1sync}
\|\psi_1(t)-\psi_2(t)\|_{H^1}\lesssim e^{-Kt},\quad\textrm{as}\;t\to\infty.
\end{equation}
\end{theorem}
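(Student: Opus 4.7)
The plan is to run an energy estimate on the difference $\psi_d:=\psi_1-\psi_2$ at the $H^1$ level, leveraging the $L^2$-scale decay $|1-z_{12}(t)|\lesssim e^{-Kt}$ guaranteed by Lemma~\ref{prop:L2sync}. Subtracting the two equations in \eqref{eq:SL2} and splitting $z_{12}=r_{12}+\mathrm{i}s_{12}$, the algebraic identity $z_{12}\psi_2-\bar z_{12}\psi_1=-r_{12}\psi_d+\mathrm{i}s_{12}(\psi_1+\psi_2)$ rewrites the evolution equation for $\psi_d$ as
\begin{equation*}
\mathrm{i}\partial_t\psi_d=-\tfrac12\Delta\psi_d+V\psi_d-\tfrac{\mathrm{i}K(1+r_{12})}{4}\psi_d-\tfrac{Ks_{12}}{4}(\psi_1+\psi_2).
\end{equation*}
Structurally the Lohe coupling has split into a dissipative self-interaction with coefficient $-\mathrm{i}K(1+r_{12})/4\to -\mathrm{i}K/2$, plus an inhomogeneous forcing proportional to $s_{12}$, which is exponentially small by Lemma~\ref{prop:L2sync}.

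Testing this equation against $\psi_d$, using the real-valuedness of $V$ and the identity $\langle\psi_d,\psi_1+\psi_2\rangle=2\mathrm{i}s_{12}$, yields
\begin{equation*}
\frac{d}{dt}\|\psi_d(t)\|_{L^2}^2=-\tfrac{K(1+r_{12}(t))}{2}\|\psi_d(t)\|_{L^2}^2-Ks_{12}(t)^2,
\end{equation*}
and Gronwall together with $r_{12}(t)\to 1$ exponentially gives the exponential decay of $\|\psi_d\|_{L^2}$. For the $\dot H^1$ part I would apply $\nabla$ to the equation and repeat the procedure; the Laplacian and $V\nabla\psi_d$ drop out by integration by parts, the self-interaction reproduces the same dissipative coefficient on $\|\nabla\psi_d\|_{L^2}^2$, and two error terms appear: one of the form $\|\nabla V\|_{L^\infty}\|\psi_d\|_{L^2}\|\nabla\psi_d\|_{L^2}$ from commuting $\nabla$ with $V$, and one of the form $K|s_{12}|\|\nabla\psi_d\|_{L^2}(\|\nabla\psi_1\|_{L^2}+\|\nabla\psi_2\|_{L^2})$ from the forcing. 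A Young's inequality absorbs the $\|\nabla\psi_d\|_{L^2}$ factors into the dissipation, and both error sources then decay exponentially provided $\|\nabla\psi_j\|_{L^2}$ grows at most polynomially in $t$.

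The main obstacle is therefore a growth bound on $\Phi(t):=\|\nabla\psi_1(t)\|_{L^2}^2+\|\nabla\psi_2(t)\|_{L^2}^2$. I would address it by running a parallel $\dot H^1$ energy estimate on $\psi_1,\psi_2$ themselves: the cross term that arises after summing in $j$ has the form $K\,\RE\int\nabla\bar\psi_1\cdot\nabla\psi_2\,dx$ and is bounded above by $\tfrac{K}{2}\Phi$, which combines with the diagonal dissipation $-\tfrac{Kr_{12}}{2}\Phi$ to leave a net coefficient $\tfrac{K(1-r_{12}(t))}{2}$ on $\Phi$, integrable on $[0,\infty)$ by Lemma~\ref{prop:L2sync}. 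Under the mild hypothesis $\nabla V\in L^\infty$, the potential contribution is controlled by $\|\nabla V\|_{L^\infty}\sqrt{\Phi}$, and Gronwall then produces at worst linear-in-$t$ growth of $\sqrt{\Phi(t)}$. Inserting this back into the $\dot H^1$ inequality for $\psi_d$ and invoking Gronwall one last time closes the bootstrap and furnishes the desired exponential decay \eqref{eq:H1sync}.
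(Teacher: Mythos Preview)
The paper does not prove this theorem: it is quoted verbatim from \cite{AM} (the authors explicitly write ``Here we will make use of Theorem 4.5 in \cite{AM}, where we address the reader for more general results in this direction''), and no argument is supplied in the present paper. There is therefore no in-paper proof to compare your proposal against.

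On its own merits your approach is sound. The derivation of the $\psi_d$ equation and the $L^2$ identity are correct, and the scheme---differentiate, exploit the dissipative coefficient $-\tfrac{K(1+r_{12})}{2}\to -K$, and treat the $(\nabla V)\psi_d$ and $s_{12}\nabla(\psi_1+\psi_2)$ contributions as exponentially decaying forcing---is exactly how such $H^1$ estimates are closed. Your auxiliary Gronwall bound on $\Phi(t)=\|\nabla\psi_1\|^2+\|\nabla\psi_2\|^2$, using that the net coefficient $\tfrac{K(1-r_{12})}{2}$ is integrable in time, is also correct and is the key point that keeps the bootstrap from blowing up.

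Two caveats. First, your argument silently imposes $\nabla V\in L^\infty$, a hypothesis not stated in the theorem; in \cite{AM} the assumptions on $V$ come from the underlying $H^1$ well-posedness theory, and you should be explicit that you are working under whatever structural conditions on $V$ make the commutator $[\nabla,V]$ controllable. Second, the rate you actually obtain is $\|\psi_d\|_{H^1}^2\lesssim e^{-(1-\varepsilon)Kt}$ for any $\varepsilon>0$ (the $L^2$ part alone gives only $\|\psi_d\|_{L^2}^2=2(1-r_{12})\lesssim e^{-Kt}$, hence $\|\psi_d\|_{L^2}\lesssim e^{-Kt/2}$), so the constant $K$ in the exponent of \eqref{eq:H1sync} should be read as ``exponential with a rate comparable to $K$'' rather than sharp. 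This is consistent with how the paper subsequently uses the result: only the qualitative limit $\|\psi_d(t)\|_{H^1}\to 0$ is invoked.
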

We apply now this result for the synchronization of system \eqref{eq:QHD_Lohe}, First of all, from \eqref{eq:H1sync} we then infer that
\begin{equation*}
\lim_{t\to\infty}\|\psi_d(t)\|_{H^1}=0.
\end{equation*}
This and the polar factorisation Lemma 3 in \cite{A-M2} then readily implies that
\begin{equation*}
\lim_{t\to\infty}\left(\|\nabla\sqrt{\rho_d}(t)\|_{L^2}+\|\Lambda_d(t)\|_{L^2}\right)=0.
\end{equation*}
Furthermore, from the fact that $\psi_d\to0$ in $H^1$ as $t\to\infty$ and the polar factorisation Lemma, again, we can also show that
\begin{equation*}
\lim_{t\to\infty}\left(\|\nabla\sqrt{\rho_1}(t)-\nabla\sqrt{\rho_2}(t)\|_{L^2}+\|\Lambda_1(t)-\Lambda_2(t)\|_{L^2}\right)=0.
\end{equation*}

\bibliographystyle{amsplain}

\end{document}